\def\NN{\mathbb N}
\def\RR{\mathbb R}
\def\FF{\mathcal F}
\def\sa{$\sigma$-algebra }
\def\d{W_1}
\def\et{\frac{\epsilon}{3}}
\def\enn{\nu_{N0}}
\def\emm{\nu_{mm}}
\def\fm{\FF_m}
\def\eh{\hat{\nu}}
\def\mn{\mu_N}
\def\mm{\mu_m}
\newtheorem{definition}{Definition}
\newtheorem{theorem}{Theorem}
\newtheorem{lemma}{Lemma}
\author{Gusti van Zyl\\ Department of Mathematics and Applied Mathematics \\ University of Pretoria}
\title{A measure approximation theorem for Wasserstein-robust expected values}
\begin{document}
	\maketitle 
\begin{abstract}
	We consider the problem of finding the infimum, over probability measures being in a ball defined by Wasserstein distance, of the expected value of a bounded Lipschitz random variable on $\RR^d$. We show that if the \sa is approximated in by a sequence of $\sigma$-algebras in a certain natural sense, then the solutions of the induced approximated minimization problems converge to that of the initial minimization problem.
\end{abstract}

{\it Keywords:} Wasserstein metric,
distribution uncertainty, model risk

{\it Mathematics Subject Classification:} 90C15, 60B10

\section{Introduction}
As part of the growing interest in models robust with respect to perturbations of the probability distribution, problems of the form
\begin{equation*}
\inf \left\{\int_X  V(x)d\nu(x):  d(\nu,\mu)\leq \theta \right\},
\end{equation*}
where $d$ denotes some distance on probability measures, are currently widely studied. In the financial literature this is connected with the problem of model risk, see for example \cite{doi:10.1111/mafi.12203}, \cite{feng2018model}, \cite{glasserman2014robust} and the references therein. For the related but more general problem of Distributionally Robust Stochastic Optimization, see for example the discussion in \cite{gao2016distributionally}. In this paper we restrict ourselves to the 1-Wasserstein distance. 

We study convergence for an approach that approximates the infinite dimensional problem with a finite dimensional problem, supported on some coarser $\sigma$-field $\FF_n$, which can then be solved with standard convex optimization algorithms. The purpose of this paper is to give general conditions on $V$ and some filtration $(\FF_n)$ that guarantees convergence. This requires amongst others a study of the effect of replacing the feasible set of the minimization problem with a ball centered on a measure $\mu_n$ that is defined on $\FF_n$.

Duality formulations of the optimization problem, or at least problems very similar to that studied in this paper, were obtained in \cite{ZHAO2018262} and \cite{MohajerinEsfahani2018}. For more general versions of these duality results, see again \cite{gao2016distributionally} and \cite{doi:10.1111/mafi.12203}. Although very powerful for obtaining explicit solutions in many cases, reducing the infinite dimensional to a finite dimensional problem, these duality results require the computation of a certain $\lambda c-$transform of $V$, see e.g. \cite{doi:10.1111/mafi.12203}. This transform is not always explicitly available or computationally trivial.

Another numerical approach which, exploits a duality representation in terms of a space of functions, is presented in \cite{Eckstein2019}, where the function space is approximated by a subspace generated by neural networks.

\section{Problem statement}

Let $X=\RR^d,\ d\in\NN, $ equipped with the Euclidean distance and the Borel $\sigma-$algebra $\FF.$ As is well known, any finite distributional distribution generates a natural measure on $\RR^d$.
Let $\mathcal P_1=\mathcal P_1(X)$ be the set of probability measures on $(X,\FF)$ that have finite $1^{st}$ moment, equipped with the 1-Wasserstein metric $W_1$. As is also well known, see for example \cite[Chapter 6]{MR2459454}, convergence in the 1-Wasserstein metric is equivalent to weak convergence and convergence of $1^{st}$ moments.

Let $\mu\in \mathcal P_1$ and $V:\RR^d\to\RR$ a bounded Lipschitz function, with Lipschitz constant $K$, and $\theta>0$ a constant that expresses the extent of robustness required. (Such parameters are standard in the Robust Optimization literature, see for example \cite{gao2016distributionally} and the references therein.) As alluded to above, our minimization problem is to determine
\begin{equation}\label{eq:problem}
\inf \left\{\int_X  V(x)d\nu(x): \nu\in \mathcal P_1,\ \d(\nu,\mu)\leq \theta \right\}.
\end{equation} 

If $\mathcal G$ is a $\sigma-$algebra with $\mathcal G\subseteq \mathcal F$ then we will also refer to the restricted problem

\begin{equation}\label{eq:problem-m}
\inf \left\{\int_X  V(x)d\nu(x): \nu\in \mathcal P_1,\ \nu:\mathcal G\to[0,1], \ \d(\nu,\mu)\leq \theta \right\}.
\end{equation} 
{\bf Notation:} For any $\mu\in \mathcal P_1(X)$, $\sigma-$algebra $\mathcal G\subseteq \FF$ and $\theta>0$, ``Problem $(\mu,\mathcal G,\theta)$" refers to the minimization problem (\ref{eq:problem-m}). A {\it solution} of the problem will be a measure for which the infimum is attained; that is, a measure $\nu:\mathcal G\to [0,1]$ for which $F(\nu):=\int_X V(x)\ d\nu(x)$ equals the infimum in (\ref{eq:problem-m}) above.

By Kantorovich-Rubinstein duality \cite[Chapter 6]{MR2459454} we have that $F:\mathcal P_1(X)\to\RR$ satisfies $|F(\rho)-F(\sigma)|\leq K W_1(\rho,\sigma)$ for all $\rho,\sigma\in \mathcal P_1(X).$ In particular, $F$ is uniformly continuous on the metric space $(\mathcal P_1(X),W_1).$

To show existence we apply a standard compactness argument to a minimizing sequence $(\nu_n)$, which exists since at least $\mu$ is in the feasible set of the minimization problem.

By the Prokhorov theorem, the sequence $(\nu_n)$ is weakly relatively compact if and only if it is tight. 
The tightness of bounded sets in $\mathcal P_1(\RR^d)$, or even $\mathcal P_p(\RR^d)$, is known in the literature, see for example \cite[Lemma 4.3]{MR2250166}. Hence $(\nu_n)$ has a subsequence that converges, to say $\nu$, weakly. Since $V$ is bounded and continuous, the weak convergence of the subsequence of measures ensures that the minimum is attained.

\section{Domain perturbation}

\begin{definition}
	We say that a filtration $(\FF_n)$ on $(X,\FF)$ is  approximating if for each measure $\rho:\FF\to [0,1]$ there exists a sequence of measures $ \rho_n:\FF_n\to [0,1]$ such that $\d(\rho_n, \rho)\to 0$ as $n\to\infty.$
\end{definition} 
Since the center of the ball $B(\mu,\theta)$ is changed when $\mu$ is replaced with a measure $\mu_n$ on a coarser \sa, we need an estimate on the change in minimum achieved if the domain is enlarged by a certain distance.

Since we could not find a reference for a simple statement of the type needed, we give such an estimate in the lemma below. For a metric space $(X,d)$ we let $d(x,A):=\inf\{d(x,y):\ y\in A \}$ denote the distance from a point $x\in X$ to a set $A\subseteq X.$ 

\begin{lemma}\label{le:domain} Let $f$ be a uniformly continuous real-valued function on a metric space $(X,d)$. Let $A\subseteq X$ be a set on which $f$ has a maximum. Then the maximum $M_{t}:=\max\{ f(x):\ d(x,A)\leq t\}$ exists for $t>0$ small enough, and satisfies $M_{t}\downarrow M_0$ as $t\downarrow 0$. Similarly, if $f$ has a minimum on $A$ then the minimum $m_{t}:=\min\{ f(x):\ d(x,A)\leq t\}$ exists for $t>0$ small enough, and satisfies $m_{t}\uparrow m_0$ as $t\downarrow 0$. The convergence is uniform over $X$.
\end{lemma}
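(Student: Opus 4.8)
The plan is to reduce everything to the modulus of continuity of $f$. Set $\omega(s):=\sup\{|f(x)-f(y)|:d(x,y)\le s\}$; this is nondecreasing, finite for $s$ small, and $\omega(s)\downarrow 0$ as $s\downarrow 0$ precisely because $f$ is uniformly continuous. It is enough to prove the ``maximum'' half of the statement: applying it to $-f$ -- which is uniformly continuous with the same modulus and has a maximum on $A$ exactly when $f$ has a minimum there -- yields the ``minimum'' half, and likewise for the claimed uniformity.

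Fix $a^{*}\in A$ with $f(a^{*})=M_{0}$ and write $C_{t}:=\{x\in X:d(x,A)\le t\}$. The heart of the argument is the two-sided estimate
\begin{equation*}
M_{0}\ \le\ \sup_{C_{t}}f\ \le\ M_{0}+\omega(t^{+}),\qquad\text{where }\omega(t^{+}):=\lim_{s\downarrow t}\omega(s).
\end{equation*}
The left inequality is trivial since $A\subseteq C_{t}$. For the right one, given $x\in C_{t}$ and $\epsilon>0$, pick $a\in A$ with $d(x,a)<t+\epsilon$ (possible by the definition of $d(x,A)$); then $f(x)\le f(a)+\omega(t+\epsilon)\le M_{0}+\omega(t+\epsilon)$, and taking the supremum over $x\in C_{t}$ and then letting $\epsilon\downarrow 0$ gives the bound. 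In particular $\sup_{C_{t}}f$ is a finite number as soon as $\omega$ is finite near $t$, which is what the qualifier ``$t$ small enough'' provides.

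The limit assertion is then immediate. Since $C_{s}\subseteq C_{t}$ for $s\le t$, the quantity $M_{t}:=\sup_{C_{t}}f$ is nondecreasing in $t$, hence converges as $t\downarrow 0$; the estimate above together with $\omega(t^{+})\le\omega(2t)\to 0$ forces the limit to equal $M_{0}$, so $M_{t}\downarrow M_{0}$. The bound $0\le M_{t}-M_{0}\le\omega(t^{+})$ depends only on the modulus of continuity of $f$, not on the particular set $A$, which I read as the content of the claimed uniformity over $X$: one and the same rate governs the convergence for every admissible $A\subseteq X$.

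The step I expect to be the real obstacle is promoting the supremum $\sup_{C_{t}}f$ to an attained maximum. Since $x\mapsto d(x,A)$ is $1$-Lipschitz, $C_{t}$ is closed; if it is moreover compact, a maximizing sequence has a convergent subsequence whose limit lies in $C_{t}$ (continuity of $d(\cdot,A)$) and realizes the supremum (continuity of $f$), so the maximum exists. The remaining work is therefore to verify that $C_{t}$ is compact for the small values of $t$ that occur, which holds in the settings where the lemma is applied; the minimum case is identical, and for a general metric space the estimates and the uniform convergence persist verbatim with $\max$ and $\min$ read as $\sup$ and $\inf$.
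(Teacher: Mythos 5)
Your proof is correct and follows essentially the same route as the paper's: uniform continuity gives $f(x)\le M_0+\epsilon$ for every $x$ with $d(x,A)\le t$ once $t$ is a modulus for $\epsilon$, and monotonicity of $t\mapsto M_t$ finishes the argument. You are in fact more careful than the paper on two points it glosses over --- choosing $a\in A$ with $d(x,a)<t+\epsilon$ rather than pretending the infimum defining $d(x,A)$ is attained, and flagging that attainment of the maximum on $\{x:\ d(x,A)\le t\}$ requires a compactness argument (which holds in the paper's application, where the feasible sets are Wasserstein balls shown to be weakly compact via tightness and Prokhorov).
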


\begin{proof} 
	That $(M_t)$ is monotone is immediate.
	Now let $\epsilon>0$ be given. Let $t>0$ so that $d(x,y)\leq t\implies |f(x)-f(y)|\leq \epsilon$. 
	Then  \begin{eqnarray*}
		d(x,A)\leq t \implies \exists y\in A,\ d(x,y)\leq t \\
		\implies f(x)\leq f(y)+\epsilon \leq M_0+\epsilon.
	\end{eqnarray*}
	Since $x$ is arbitrary we get $M_{t}\leq M_0+\epsilon.$ Since $t$ does not depend on $x\in X$, we have uniform convergence.
	
	The proof for the minimum $m_t$ is similar.
\end{proof}

\section{Convergence}
Our main result is the following. 
\begin{theorem}
	Let $(\FF_n)$ be an approximating filtration, with $( \mu_n)$ adapted to $(\FF_n)$ such that $\displaystyle\lim_{n\rightarrow\infty} W_1(\mu_n,\mu)=0.$ 
	With $\nu$ denoting a solution of Problem $(\mu,\FF,\theta)$, and for each $m\in\NN,$ $\nu_m$ a solution of Problem $(\mu_m,\FF_m,\theta)$, we have that 
	$$\lim_{m\rightarrow\infty} F(\nu_m)=F(\nu).$$
\end{theorem}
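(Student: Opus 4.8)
The plan is to establish the two one-sided inequalities $\limsup_{m\to\infty} F(\nu_m)\le F(\nu)$ and $\liminf_{m\to\infty} F(\nu_m)\ge F(\nu)$; throughout put $\delta_m:=W_1(\mu_m,\mu)$, so that $\delta_m\to 0$, and recall from the Kantorovich--Rubinstein estimate that $F$ is $K$-Lipschitz on $(\mathcal P_1(X),W_1)$, so $W_1$-convergence of measures forces convergence of their $F$-values. I will also use the elementary contraction inequality
\begin{equation*}
W_1\big((1-t)\alpha+t\beta,\ \alpha\big)\le t\,W_1(\alpha,\beta),\qquad \alpha,\beta\in\mathcal P_1(X),\ t\in[0,1],
\end{equation*}
which follows by gluing the diagonal coupling of $\alpha$ with weight $1-t$ to an optimal coupling of $\alpha$ and $\beta$ with weight $t$; note also that a convex combination of two measures of the form $\rho:\FF_m\to[0,1]$ is again such a measure.

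For the upper bound I would produce, for each $m$, a measure that is \emph{feasible} for Problem $(\mu_m,\FF_m,\theta)$ and $W_1$-close to $\nu$. Since $(\FF_m)$ is approximating, choose $\hat{\nu}_m:\FF_m\to[0,1]$ with $W_1(\hat{\nu}_m,\nu)\to 0$; these need not lie in $B(\mu_m,\theta)$, but the triangle inequality gives $W_1(\hat{\nu}_m,\mu_m)\le\theta+\eta_m$ with $\eta_m:=W_1(\hat{\nu}_m,\nu)+\delta_m\to 0$. Now contract towards the moving centre: put $t_m:=\min\{1,\ \theta/W_1(\hat{\nu}_m,\mu_m)\}$ and $\rho_m:=(1-t_m)\mu_m+t_m\hat{\nu}_m$. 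Then $\rho_m:\FF_m\to[0,1]$, it has finite first moment, and by the contraction inequality $W_1(\rho_m,\mu_m)\le t_m\,W_1(\hat{\nu}_m,\mu_m)\le\theta$, so $\rho_m$ is feasible; moreover $W_1(\rho_m,\hat{\nu}_m)\le(1-t_m)W_1(\hat{\nu}_m,\mu_m)\le\eta_m\to 0$, whence $W_1(\rho_m,\nu)\to 0$. Therefore $F(\nu_m)\le F(\rho_m)\to F(\nu)$, giving the upper bound.

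For the lower bound, observe that $\nu_m$ is feasible for Problem $(\mu_m,\FF_m,\theta)$, hence in particular $\nu_m\in\mathcal P_1$ with $W_1(\nu_m,\mu)\le W_1(\nu_m,\mu_m)+\delta_m\le\theta+\delta_m$; thus $F(\nu_m)\ge\inf\{F(\rho):\rho\in\mathcal P_1,\ W_1(\rho,\mu)\le\theta+\delta_m\}$. This is precisely the setting of Lemma \ref{le:domain}, applied in the metric space $(\mathcal P_1(X),W_1)$ to the uniformly continuous function $F$ and to the set $A:=\{\rho\in\mathcal P_1:W_1(\rho,\mu)\le\theta\}$, on which $F$ attains its minimum $m_0=F(\nu)$ (the solution $\nu$ of Problem $(\mu,\FF,\theta)$). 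A one-line application of the contraction inequality (this time contracting an arbitrary $\rho$ with $W_1(\rho,\mu)\le\theta+t$ towards $\mu$) identifies the $t$-enlargement $\{\rho:W_1(\rho,A)\le t\}$ with $\{\rho\in\mathcal P_1:W_1(\rho,\mu)\le\theta+t\}$, so Lemma \ref{le:domain} yields that $m_t:=\min\{F(\rho):W_1(\rho,A)\le t\}$ exists for small $t$ and $m_t\uparrow m_0$ as $t\downarrow 0$. Since $\delta_m\to 0$ and $t\mapsto m_t$ is monotone, $m_{\delta_m}\to m_0=F(\nu)$, and from $F(\nu_m)\ge m_{\delta_m}$ we get $\liminf_m F(\nu_m)\ge F(\nu)$. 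Combining the two inequalities proves the theorem.

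The step I expect to be the main obstacle is the upper bound: the approximating filtration delivers measures close to $\nu$ but gives no control over whether they satisfy the Wasserstein constraint centred at the perturbed point $\mu_m$, so one must push them back into the moving ball $B(\mu_m,\theta)$ without destroying $W_1$-proximity to $\nu$ --- this is the role of the convex-combination contraction, which also underlies the identification of $t$-enlargements of Wasserstein balls needed to invoke Lemma \ref{le:domain} in the lower bound. A minor point requiring care is that the class of measures of the form $\rho:\FF_m\to[0,1]$ is convex, so that the interpolants $\rho_m$ are themselves admissible competitors.
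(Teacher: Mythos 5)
Your proof is correct, but it takes a genuinely different route from the paper's. The paper also organizes the argument around Lemma \ref{le:domain}, but its upper bound passes through an intermediate Problem $(\mu_N,\FF,\theta)$ and only shows that the approximating measure lands in the \emph{enlarged} ball $B(\mu_m,\theta+3\delta)$, after which Lemma \ref{le:domain} must be invoked again (for the restricted functional $F|_{\FF_m}$, uniformly in $m$) to shrink the radius back to $\theta$; the limit is then extracted via a monotonicity claim for $m\mapsto F(\nu_m)$. Your convex-combination contraction $\rho_m=(1-t_m)\mu_m+t_m\hat\nu_m$ replaces all of this: it pushes the approximating measures back inside the exact feasible ball $B(\mu_m,\theta)$ while keeping them $W_1$-close to $\nu$, so the bound $\limsup_m F(\nu_m)\le F(\nu)$ follows from the Lipschitz continuity of $F$ alone, with no radius enlargement, no intermediate problem, and no appeal to uniformity of Lemma \ref{le:domain} over the restricted classes. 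Your separate lower bound, obtained by applying Lemma \ref{le:domain} in $(\mathcal P_1(X),W_1)$ together with the identification of $t$-enlargements of Wasserstein balls with balls of radius $\theta+t$ (again by contraction), is also more explicit than the paper's treatment of that direction. The two ingredients that carry your argument are sound: the inequality $W_1\bigl((1-t)\alpha+t\beta,\alpha\bigr)\le t\,W_1(\alpha,\beta)$ follows from the gluing you describe (Kantorovich--Rubinstein duality in fact gives equality), and the convexity of the class of measures defined on $\FF_m$, which you rightly flag, is the only measurability point that needs checking for $\rho_m$ to be an admissible competitor.
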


\begin{proof}
	Let $\epsilon>0$ be given.  Adapting the notation of Lemma (\ref{le:domain}), let for any measure $\nu$ on $\FF$ and $\delta\geq 0$, $m_{\delta}(\nu)$ denote the minimum value for the problem $(\nu,\FF,\theta+\delta).$ 
	
	By Lemma \ref{le:domain} and the uniform continuity of $F$, there exists a $\delta>0$ so that  
	\begin{equation}\label{Eq:implication}
	W_1(\rho,\sigma)\leq \delta\implies \left(m_{\delta}(\rho)\leq m_0(\sigma)+\frac{\epsilon}{3} \text{ and } |F(\rho)-F(\sigma)|\leq \frac{\epsilon}{3}\right).\end{equation} 
		By the approximating property of $(\FF_n)$ there exists $N\in\NN$ such that $n\geq N\implies \d(\mu_n,\mu)\leq \delta.$ Let $\enn$ denote a solution of Problem $(\mu_N,\FF,\theta).$ It follows from Implication (\ref{Eq:implication}) that $|F(\enn)-F(\nu)|\leq \et.$ In other words, we have approximated the original problem with a problem where the midpoint is a measure on $\FF_N.$

Now let $m\geq N$ be large enough so that there exists a probability measure $\eh$ on $\FF_m$ with $\d(\eh,\enn)\leq \min\{\delta,\epsilon\}.$ Then
\begin{eqnarray*}
	\d(\eh,\mu_m)&\leq& \d(\eh,\enn)+\d(\enn,\mn)+\d(\mn,\mu)+\d(\mu,\mm) \\
	&\leq& \delta+\theta+\delta+\delta=\theta+3\delta.
\end{eqnarray*}
Hence $\eh$ is in the feasible set of Problem $(\mu_m,\fm,\theta+3\delta).$
Since the convergence in Lemma (\ref{le:domain}) is uniform over sets, we can use the lemma repeatedly to deduce that $F(\emm)$, where $\emm$ is a solution of Problem $(\mm,\fm,\theta),$ satisfies 
$$F(\emm)\leq F(\eh)+3\et=F(\eh)+\epsilon\leq F(\enn)+ 2  \epsilon\leq F(\nu)+3\epsilon.$$ (Note that Lemma (\ref{le:domain}) can be applied to $F|_{\fm}$, which is still uniformly continuous.) 
Since $\epsilon>0$ is arbitrary, and the sequence $m\mapsto F(\nu_{mm})$ is monotone decreasing, this shows that $\displaystyle\lim_{m\to\infty}F(\emm)=F(\nu).$	
\end{proof}

\section{Acknowledgements} Financial support from the South African National Research Foundation, and the DST-Absa Directed Risk Research programme is gratefully acknowledged.

\bibliographystyle{plain}
\bibliography{robustdatabase}




\end{document}